\theoremstyle{definition}
\newtheorem{definition}{Definition}
\theoremstyle{plain}
\newtheorem{theorem}[definition]{Theorem}
\newtheorem{lemma}[definition]{Lemma}
\newtheorem{observation}[definition]{Observation}
\newtheorem{remark}[definition]{Remark}
\author{Sylwia Antoniuk}
\address{Adam Mickiewicz University,
Faculty of Mathematics and Computer Science \textnormal{(on leave)},
ul.~Umultowska 87,
61-614 Pozna\'n, Poland}
\address{Instytut Matematyczny, Polska Akademia Nauk, ul.~\'Sniadeckich 8, 
00-656 Warszawa, Poland}
\email{\tt antoniuk@amu.edu.pl}
\author{Codru\unichar{355} Grosu}
\address{Freie Universit\"at, Berlin, Germany}
\thanks{CG and LN were supported by the Deutsche Forschungsgemeinschaft within the research training group `Methods for Discrete Structures' (GRK 1408).}
\email{grosu.codrut@gmail.com}
\author{Lothar Narins}
\email{\tt narins@math.fu-berlin.de}
\title[]
{On the connectivity Waiter-Client game}
\date{}
\begin{document}

\begin{abstract}
In this short note we consider a variation of the connectivity Waiter-Client game $WC(n,q,\mathcal{A})$ played 
on an $n$-vertex graph $G$ which consists of $q+1$ disjoint spanning trees. In this game in each 
round Waiter offers Client $q+1$ edges of $G$ which have not yet been offered. Client chooses one edge 
and the remaining $q$ edges are discarded. The aim of Waiter is to force Client to build 
a connected graph. If this happens Waiter wins. Otherwise Client is the winner. We consider 
the case where $2 < q+1 < \lfloor \frac{n-1}{2}\rfloor$ and show that for each such $q$ there exists a graph $G$ 
for which Client has a winning strategy. This result stands in opposition to the case 
where $G$ consists of just 2 spanning trees or where $G$ is a complete graph, since it has been shown 
that for such graphs Waiter can always force Client to build a connected graph.
\end{abstract}

\maketitle

\section{Introduction}

Waiter-Client games were first defined and studied by Beck (see e.g \cite{B2002}) 
under the name of Picker-Chooser games. These are positional games closely related 
to the well-studied Maker-Breaker games and Avoider-Enforcer games. Waiter-Client game 
$WC(n,q,\mathcal{A})$ is a two player, perfect information game played on the complete graph 
$K_n$, which proceeds in rounds. In each round, the first player, called Waiter, offers 
$q+1$ edges of $G$ which have not yet been offered. The second player, called Client, 
chooses one edge, and the remaining edges are discarded. The aim of Waiter is to 
force Client to build a graph that satisfies a given monotone property $\mathcal{A}$. 
If this happens Waiter wins. Otherwise Client wins. 

We consider the following version of Waiter-Client game $WC(n,q,\mathcal{A})$ which we call 
the connectivity Waiter-Client game. This time the game is played 
on an $n$-vertex graph $G$ which is the union of $q+1$ disjoint spanning trees 
and the aim of Waiter is to force Client to build a connected graph, i.e. $\mathcal{A}$ 
is the property of being connected.

Csernenszky et al. \cite{CMP2009} showed that for $q+1=2$ Waiter always has a winning strategy.  
Bednarska-Bzdega et al. \cite{BBHKL2014} showed that the same is true for $G$ 
being a complete graph, in which case $n$ is 
necessarily even. This result follows from the more 
general Theorem 3.3 in \cite{BBHKL2014} which says that in the 
Waiter-Client connectivity game $WC(n,q,\mathcal{A})$ played on the complete graph $K_n$, 
Waiter can always force Client to build a graph of size at least $\min\{n,2(n-q-1)\}$, 
provided that $n$ is sufficiently large. In particular, if $n$ is even and $q+1=n/2$ then
Waiter wins.

In \cite{BBHKL2014}, the authors posed a question whether in the connectivity Waiter-Client 
game $WC(n,q,\mathcal{A})$ played on a graph $G$ which is a disjoint union of 
$q+1$ spanning trees Waiter always has a winning strategy. We show that the answer to 
this question is negative. 

\begin{theorem}
\label{th1}
Let $2 < q+1 < \lfloor \frac{n-1}{2} \rfloor$. Then there exists an $n$-vertex graph $G$ 
which is a union of $q + 1$ disjoint 
spanning trees and such that in the connectivity Waiter-Client game $WC(n,q,\mathcal{A})$ 
played on $G$, Client has a winning strategy.
\end{theorem}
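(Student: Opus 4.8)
The plan is to construct a specific graph $G$ that is a union of $q+1$ disjoint spanning trees, yet has a structural bottleneck that Client can exploit to keep the chosen subgraph disconnected. The fundamental obstruction I would aim for is a small vertex cut: if $G$ has a cut set $S$ such that removing $S$ separates $G$ into many components, then to build a connected graph Client must eventually select edges covering all the ``crossing'' edges needed to link these components through $S$. Client wins if she can always avoid taking an edge incident to some fixed vertex (or some fixed small set of vertices), thereby leaving that vertex isolated in her graph. So the design goal is a graph in which some vertex $v$ has relatively low degree, and Waiter is forced to offer the edges at $v$ in a way that lets Client dodge all of them.

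First I would analyze the degree and round-count constraints. The game runs for exactly $|E(G)|/(q+1)$ rounds, since each round consumes $q+1$ edges; as $G$ is $q+1$ spanning trees on $n$ vertices, $|E(G)| = (q+1)(n-1)$, giving exactly $n-1$ rounds. Client picks one edge per round, so Client's final graph has exactly $n-1$ edges --- precisely the size of a spanning tree. Hence Waiter wins if and only if Client is forced to pick a spanning tree, and Client wins if she can make even one vertex isolated (or more generally keep two components apart). The natural strategy for Client is to fix a target vertex $v$ of degree $d$ and refuse every edge at $v$; she fails only if in some round \emph{all} $q+1$ offered edges are incident to $v$, forcing her to take one. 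I would therefore engineer $G$ so that some vertex $v$ has degree at most $q$ but the edges at $v$ are distributed so Waiter can never offer $q+1$ of them simultaneously, or more robustly, so that Client has a strategy never forced to touch $v$.

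The key construction I would pursue: take $v$ to have degree exactly $q+1$ (one edge in each spanning tree), but make every other vertex have much higher degree, and arrange a counting/pairing argument. Client's strategy is to always decline the edge(s) at $v$ when offered alongside other edges. The danger round is the one where Waiter offers all $q+1$ edges at $v$ together. To neutralize this, I would instead use a \emph{set} of vertices or a matching-type argument, or exploit that the constraint $q+1 < \lfloor (n-1)/2 \rfloor$ guarantees enough ``room'' elsewhere so that Client can maintain an invariant --- e.g., that at each point the set of vertices she has failed to connect retains a private reservoir of undeclined edges larger than Waiter can exhaust in the remaining rounds. Concretely, I expect to track a potential function measuring, for the threatened vertex set, the number of its edges not yet offered minus the number of rounds in which Waiter could force a pick, and show Client keeps this positive.

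The main obstacle, I anticipate, is handling the endgame and the forcing rounds precisely: Waiter may deliberately defer all of $v$'s edges to the final rounds, so that near the end Client has few declines left and Waiter can pack $v$'s edges into a single offer. Overcoming this requires either designing $G$ so that $v$'s edges are structurally entangled with edges Client \emph{wants} (so Waiter cannot isolate them cheaply), or using two or more threatened vertices so that Waiter cannot simultaneously corner all of them within $q+1$ edges per round. The inequality $q+1 < \lfloor (n-1)/2 \rfloor$ should be exactly what makes such a multi-target or reservoir argument go through, since it caps the per-round offer size well below the number of independent ``threats'' Client can juggle. I would verify the construction is realizable as $q+1$ edge-disjoint spanning trees via a Nash-Williams style decomposition check, and then prove Client's strategy maintains disconnection to the final round.
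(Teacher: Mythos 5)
Your proposal has the right instinct---set up several ``threatened'' low-degree vertices so that Waiter cannot corner them all---but it never resolves the one difficulty it correctly identifies, and one of its concrete suggestions is impossible. First, the impossibility: in a union of $q+1$ edge-disjoint spanning trees every vertex has degree at least $q+1$ (one edge per tree), so there is no vertex of degree at most $q$; that branch of your plan is void. Second, a single vertex of degree exactly $q+1$ fails for the reason you state (Waiter offers all its edges in one round), and your proposed fixes---a potential function, a ``reservoir'' invariant, or several threatened vertices---are left entirely unspecified. In fact, multiple threats at \emph{pairwise non-adjacent} vertices do not help at all: when Waiter corners one threatened vertex $v$ by offering all $q+1$ of its edges, Client's forced pick joins $v$ to the bulk of the graph, the threat at $v$ is dead, and Waiter simply proceeds to the next threat. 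No counting of rounds versus declines rescues this, because each threat is extinguished in a single round at zero cost to Waiter.

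The missing idea, which is the heart of the paper's proof, is that the threatened vertices must be \emph{mutually adjacent} and of degree exactly $q+2$: the paper builds $G$ so that three vertices $u_0,v_0,w_0$ span a triangle and each has degree $q+2$ in $G$. Adjacency is what turns Waiter's forcing move against him: if Waiter offers all remaining edges at $u_0$, Client takes the edge $u_0v_0$ (or $u_0w_0$) \emph{inside} the triangle, and the resulting two-vertex component has so few remaining outgoing edges (at most $q+1$, and after the next exchange at most $q$) that Client can decline all of them forever. Degree $q+2$ rather than $q+1$ is forced on the construction, since two adjacent vertices of degree $q+1$ would both be leaves of the tree containing the edge between them, contradicting that the tree spans; and two adjacent degree-$(q+2)$ vertices still lose (after the pair is merged it retains $q+1$ outgoing edges, which Waiter can offer all at once), which is why a triangle of three such vertices is needed. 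On top of this key configuration, the paper supplies what your proposal also lacks: an explicit graph $G(q)$ on $3(q+1)$ vertices realizing the triangle inside a union of $q+1$ spanning trees (using Nash--Williams on the remaining vertices, as you anticipated), a short case analysis of Waiter's possible first moves near the triangle, and an induction adding two vertices and one spanning tree at a time to cover every pair $(n,q)$ with $2 < q+1 < \lfloor\frac{n-1}{2}\rfloor$---the hypothesis on $q$ being exactly what lets the new vertices attach to enough old ones. As it stands, your text is a plan that stops at the point where the actual construction and strategy would have to begin.
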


\begin{remark}
This leaves open the cases $q+1 = (n-1)/2$ for $n$ odd, and $q+1 = n/2 - 1$ for $n$ even. It may be tempting to believe that in these cases Client wins as well. However, we found that for $n=7$ and $n=9$, Waiter has a winning strategy.
\end{remark}

\section{The proof}

We first fix some notation. Whenever the game is played on a graph $G$, we proceed in rounds. 
In each round we delete from $G$ all edges offered by Waiter in this round 
and we denote by $G_i$ the graph obtained in this way, where $i$ is the number of the round.
In particular, $G_0 = G$ and $G_{n-1} = \emptyset$. The edges 
which have not yet been offered by Waiter are called \textit{free edges}. Moreover, we let $H_i$ denote 
the graph built on the same vertex set as $G$ and consisting of 
all the edges chosen by Client up till the $i$-th round. In particular, $H_0=\emptyset$ 
and $H = H_{n-1}$ is the graph built by Client when the game has finished. 
The Client wins if $H_{n-1}$ is not connected.

We make the following easy observations which hold for all rounds~$i$.
\begin{observation}
\label{obs:obs1}
If in the $i$-th round, there is a connected component $C \neq H_i$ in $H_i$ 
that has at most $q+2$ outgoing edges in $G_i$, 
then Waiter has the following options:
\begin{itemize}
 \item he offers only edges not incident to $C$;
 \item all the edges he offers are incident to $C$;
 \item in case the number of edges incident to $C$ in $G_i$ is exactly $q+2$, he offers 
 one of them and $q$ other edges not incident to $C$.
\end{itemize}
\end{observation}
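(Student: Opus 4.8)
The plan is to prove this purely by counting the free edges available at the start of round $i$ and checking that each listed move is a legal offer of exactly $q+1$ distinct free edges. I would first set $m$ to be the number of edges of $G_i$ incident to $C$ (the outgoing edges of $C$), so that the hypothesis reads $m \le q+2$. Since the game has not yet ended we have $i \le n-2$; moreover $G$ is a union of $q+1$ spanning trees, so $G_i$ contains exactly $(q+1)(n-1-i)$ free edges, which is at least $q+1$. The three bulleted moves are exactly the offers in which the number of offered edges incident to $C$ equals $0$, $q+1$, or (in the boundary case) $1$, so the whole content of the observation is that, under the hypothesis $m \le q+2$, an offer of the appropriate shape can actually be assembled.

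I would then split on the size of $m$. If $m \ge q+1$, then because $m \le q+2$ there are at least $q+1$ free edges incident to $C$, so Waiter may offer $q+1$ of them; this realises the second bullet, and when $m = q+2$ he may instead offer a single incident edge together with $q$ edges avoiding $C$, which realises the third. If instead $m \le q+1$, the number of free edges not incident to $C$ is $(q+1)(n-1-i) - m$, and a one-line estimate shows this is at least $q+1$ as soon as two rounds remain (using $m \le q+1$ and $n-1-i \ge 2$); Waiter may then offer $q+1$ such edges, giving the first bullet. Thus away from the very end of the game the hypothesis $m \le q+2$ is precisely what guarantees that one of the three offers is available, with the separate treatment of $m = q+2$ in the third bullet covering the one configuration not reached by the first two.

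The step I expect to require the most care, and the only genuine obstacle, is the endgame, where free edges are scarce. In the final round $G_i$ has only $q+1$ free edges, Waiter's offer is forced, and one finds that the list is exhaustive only when $m \in \{0, q+1\}$; a configuration with $1 \le m \le q$ in the last round is not literally covered by any of the three bullets. I would resolve this by noting that the observation is invoked in the main argument only while several rounds remain, where $(q+1)(n-1-i) - m \ge q+1$ holds and the first option is always on the table, so the degenerate final-round case never interferes. With that caveat recorded, the remainder is elementary bookkeeping: the statement merely says that Waiter may split his $q+1$ edges between $C$ and its complement in one of these prescribed ways, and no further idea is needed.
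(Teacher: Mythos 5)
Your proof establishes the wrong statement. You read the observation as a feasibility claim --- that Waiter \emph{is able} to assemble an offer of each of the three listed shapes --- and you prove that by counting free edges. But that claim is essentially trivial and is not what the paper means or uses: in the proof of Lemma~\ref{lemma2} the observation is invoked as a \emph{restriction} on Waiter (``the first time Waiter presents an edge incident to any of the above vertices, say to $u_0$, he \emph{must} present either exactly one edge incident to $u_0$ or $q+1$ edges incident to that vertex''). The real content of the observation is that any offer \emph{not} of one of the three listed shapes loses immediately for Waiter, so these are his only viable options. Concretely, what has to be shown is: if Waiter offers $j$ edges incident to $C$ with $2 \le j \le q$ (or $j = 1$ when $C$ has at most $q+1$ outgoing edges), then Client has a winning strategy.

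The missing idea is Client's refusal strategy, which is the paper's entire proof. If Waiter's offer contains $j$ edges incident to $C$ with $2 \le j < q+1$, then the offer also contains an edge not incident to $C$, so Client can refuse all $j$ incident edges. After that, at most $q+2 - j \le q$ free edges incident to $C$ remain; hence every later offer necessarily contains an edge not incident to $C$, Client can keep refusing forever, and $C$ ends up as a proper isolated component of $H_{n-1}$, so Client wins. (The same argument with $j = 1$ and at most $q+1$ outgoing edges leaves at most $q$ incident free edges, which is exactly the losing position of Observation~\ref{obs:obs2}; this is why the third bullet demands exactly $q+2$ outgoing edges.) Your counting argument never mentions Client at all, so it cannot yield this conclusion; in particular, your careful treatment of the endgame round is beside the point, since the issue is not whether Waiter can physically make the listed offers but that every other offer loses the game for him.
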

Indeed, if Waiter offers $2 \leq i < q+1$ edges incident to $C$, then Client can refuse all of them. Then at any
later moment of time, Waiter can offer at most $q+2 - i < q+1$ edges incident to $C$. 
Consequently, Client can always refuse all of them,
leading to $C$ becoming an isolated component in $H_{n-1}$.
\begin{observation}
\label{obs:obs2}
If $C \neq H_i$ is a connected component in $H_i$ that has at most $q$ outgoing edges in $G_i$ then Client wins the game.
\end{observation}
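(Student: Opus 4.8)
The plan is to show that, once such a component $C$ appears at round $i$, Client can keep $V(C)$ isolated from the rest of the vertices for the entire remainder of the game. The first thing I would record is a monotonicity fact: since the vertex set $V(C)$ is fixed from round $i$ onward, whether a given edge \emph{leaves} $C$ (i.e.\ has exactly one endpoint in $V(C)$) is a property of the edge alone and does not depend on the play. The outgoing edges of $C$ in $G_i$ are precisely the free edges leaving $C$, and as the game proceeds the set of free edges can only shrink. Hence in every round $j \geq i$ the number of free edges leaving $C$ remains at most $q$.

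Next I would describe Client's strategy from round $i$ on: in each round Client selects any offered edge that does \emph{not} leave $C$. Such an edge always exists, because Waiter is required to offer $q+1$ edges, while at most $q$ of the currently free edges leave $C$; therefore at least one of the $q+1$ offered edges has either both endpoints inside $V(C)$ or both endpoints outside $V(C)$, and Client takes that one. This is exactly the place where the bound $q$ is used: it is one less than the number $q+1$ of edges Waiter must reveal each round, which is precisely what guarantees a safe choice in every round.

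Finally I would verify that this strategy wins. Under it Client never adds an edge with exactly one endpoint in $V(C)$, so in the final graph $H_{n-1}$ there is no edge between $V(C)$ and $V(G) \setminus V(C)$. Since the hypothesis $C \neq H_i$ guarantees that $V(C)$ is a proper nonempty subset of $V(G)$, the graph $H_{n-1}$ is disconnected, and Client wins.

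The argument is short, and the only point that needs any care is the monotonicity of the count of free edges leaving $C$; everything else is a direct counting observation and I do not anticipate a genuine obstacle. Note in particular that this observation, unlike Observation~\ref{obs:obs1}, does not require Client to refuse offered edges incident to $C$ across several rounds — a single greedy rule suffices because the count never reaches $q+1$.
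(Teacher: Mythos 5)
Your proof is correct and matches the argument the paper intends: the paper states Observation~\ref{obs:obs2} without proof precisely because this greedy counting argument is immediate, and it is the same refusal idea used in the paper's justification of Observation~\ref{obs:obs1} (Waiter must offer $q+1$ edges per round, but at most $q$ free edges leave $C$, so Client can always decline every edge incident to $C$). Your additional care about monotonicity of the count of free outgoing edges and the use of $C \neq H_i$ to conclude $V(C) \subsetneq V(G)$ is exactly the right bookkeeping.
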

\begin{observation}
\label{obs:obs3}
If $u$ and $v$ are isolated in $H_i$, $u$ and $v$ have degree $q+1$ in $G_i$ and are adjacent, then Client wins the game.
\end{observation}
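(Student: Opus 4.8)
The plan is to exhibit an explicit winning strategy for Client that keeps $u$ and $v$ isolated for as long as possible and, at the decisive moment, produces a small component to which Observation~\ref{obs:obs2} applies. Throughout, while $u$ and $v$ are both isolated in $H$, I would track the quantity $s$ equal to the number of free edges of $G$ incident to $\{u,v\}$. Since $u$ and $v$ each have degree $q+1$ in $G_i$ and share the edge $uv$, initially $s = 2(q+1) - 1 = 2q+1$; as free edges can only leave $G$, the quantity $s$ is non-increasing, so $s \le 2q+1$ at all times. Client's rule is: in each round, if some offered edge is incident to neither $u$ nor $v$, Client takes such an edge, so that every offered edge touching $\{u,v\}$ is discarded, decreasing $s$ while leaving $u,v$ isolated; only when all $q+1$ offered edges are incident to $\{u,v\}$ is Client forced to touch one of them.

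The argument then splits according to the edge $uv$. First I would show that if $uv$ is ever offered while $u,v$ are isolated, Client wins. If Client is not forced (some offered edge misses $\{u,v\}$), he discards $uv$ by taking a non-incident edge, after which $u$ stays isolated with at most $q$ free incident edges, and Observation~\ref{obs:obs2} applies to $C=\{u\}$. If Client is forced (all $q+1$ offered edges touch $\{u,v\}$), he takes $uv$, merging $u$ and $v$; the remaining $q$ offered edges are all outgoing and are discarded, so the component $\{u,v\}$ is left with $s - (q+1) \le q$ outgoing edges, and Observation~\ref{obs:obs2} applies to $C=\{u,v\}$.

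It then remains to treat the case where $uv$ is never offered while $u,v$ are isolated. If Client is never forced, the game ends with $u$ and $v$ still isolated and Client wins outright. Otherwise I would consider the first forced round: all $q+1$ offered edges are incident to $\{u,v\}$ and none is $uv$, so they split as $\alpha$ edges at $u$ and $\beta$ edges at $v$ with $\alpha + \beta = q+1$. Because $uv$ is still free, $u$ and $v$ each have at most $q$ outgoing edges, forcing $\alpha \le q$ and $\beta \le q$, hence $\alpha,\beta \ge 1$. Client then takes an edge at $u$; this discards the $\beta \ge 1$ offered edges at $v$, leaving $v$ isolated with at most $(q+1)-\beta \le q$ free incident edges, so Observation~\ref{obs:obs2} applies to $C=\{v\}$.

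I expect the main obstacle to be precisely this forced case: once all $q+1$ offered edges touch $\{u,v\}$, Client must connect one of the two vertices, so the strategy cannot simply preserve both in isolation forever. The role of the hypotheses is exactly to make this harmless. Degree equal to $q+1$ gives $s \le 2q+1$, which guarantees that after the forced round the relevant component has at most $q$ outgoing edges; the adjacency of $u$ and $v$ supplies the edge $uv$, whose fate (discarded, or taken to merge $u$ and $v$) is what pushes the outgoing count down to the threshold of Observation~\ref{obs:obs2}. A minor point I would check is that the component obtained is always a proper component $C \neq H$, which holds since $n > 2(q+1)$ leaves vertices outside $\{u,v\}$.
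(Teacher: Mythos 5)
Your proof is correct and follows essentially the same strategy as the paper's: Client refuses edges touching $\{u,v\}$ until forced, takes $uv$ when it appears in a forced round, and finishes via Observation~\ref{obs:obs2}. If anything, your case analysis is more complete than the paper's two-line argument, which reduces to the ``all $q+1$ edges incident to $u$'' case without loss of generality and leaves implicit the split offer ($\alpha$ edges at $u$, $\beta$ at $v$, $uv$ not offered) that you resolve explicitly by stranding $v$ with at most $q$ free edges.
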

Indeed, suppose without loss of generality that Waiter offers all $q+1$ edges incident to $u$.
Then Client chooses the edge $uv$. At any later moment of time, Client can discard any offered edge
that is incident to $v$, leading to $uv$ becoming an isolated component in $H_{n-1}$.

Before we proceed to the proof of Theorem~\ref{th1}, we need the following auxiliary lemma.

\begin{lemma}
\label{lemma1}
Let $K_{3k}$ be the complete graph on $3k$ vertices, where $k \geq 2$. Then there exist 
$k+1$ disjoint spanning trees in $K_{3k}$.
\end{lemma}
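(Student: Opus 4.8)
The plan is to deduce the lemma from the Nash--Williams--Tutte theorem on packing edge-disjoint spanning trees, which states that a graph $G$ contains $t$ edge-disjoint spanning trees if and only if every partition $\mathcal{P}$ of its vertex set into $r$ parts satisfies $e_G(\mathcal{P}) \ge t(r-1)$, where $e_G(\mathcal{P})$ counts the edges of $G$ joining distinct parts. Applying this with $G = K_{3k}$ and $t = k+1$, it then suffices to verify this single partition inequality for every choice of $r$ and every partition shape.

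The first step is to write the crossing-edge count explicitly: for a partition of $V(K_{3k})$ into parts of sizes $n_1,\dots,n_r$, every non-monochromatic pair of vertices contributes a crossing edge, so
$$e_{K_{3k}}(\mathcal{P}) = \binom{3k}{2} - \sum_{i=1}^{r}\binom{n_i}{2}.$$
The key step will be to identify, for each fixed $r$, the worst partition, i.e.\ the one minimizing the left-hand side, equivalently the one maximizing $\sum_i\binom{n_i}{2}$. Since $x\mapsto\binom{x}{2}$ is convex, a short smoothing argument (moving a vertex from a smaller part to a larger one changes the sum by $b-a+1>0$ and so never decreases it) shows that the maximum is attained when the parts are as unequal as possible: one block of size $3k-r+1$ together with $r-1$ singletons. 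I expect this convexity/extremality step to be the only point requiring care; everything after it is a one-line computation.

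It then remains to check the inequality only in this extremal configuration. Writing $m=r-1$, a direct computation gives
$$\binom{3k}{2} - \binom{3k-m}{2} = \frac{m(6k-m-1)}{2},$$
so the required bound $e_{K_{3k}}(\mathcal{P}) \ge (k+1)(r-1) = (k+1)m$ reduces, after dividing by $m\ge 1$, to $6k-m-1 \ge 2(k+1)$, i.e.\ to $m \le 4k-3$. Since any partition has at most $3k$ parts we have $m = r-1 \le 3k-1$, and $3k-1 \le 4k-3$ holds exactly when $k\ge 2$. Thus the Nash--Williams--Tutte condition is met for all partitions and $K_{3k}$ admits the desired $k+1$ edge-disjoint spanning trees.

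As an alternative, purely constructive route one could instead invoke the classical Hamiltonian decompositions of complete graphs: $K_{3k}$ decomposes into $3k/2$ edge-disjoint Hamiltonian paths when $k$ is even, and into $(3k-1)/2$ edge-disjoint Hamiltonian cycles when $k$ is odd (from each cycle one deletes a single edge to obtain a spanning tree). Selecting $k+1$ of these spanning subgraphs works for the same numerical reason, namely $3k/2 \ge k+1$ and $(3k-1)/2 \ge k+1$ precisely for $k\ge 2$. I would present the Nash--Williams--Tutte argument as the main proof, since it is self-contained once the theorem is cited and avoids spelling out an explicit decomposition.
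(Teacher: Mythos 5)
Your proof is correct, and it rests on the same underlying result as the paper's, namely Nash--Williams' tree-packing theorem, but you execute it in a more self-contained way. The paper's entire proof is a one-line citation: by Nash--Williams' theorem, $K_n$ contains $\left\lfloor n/2 \right\rfloor$ edge-disjoint spanning trees, and $\left\lfloor 3k/2 \right\rfloor \geq k+1$ for $k \geq 2$. You instead invoke the general Nash--Williams--Tutte partition criterion and verify it directly for $t = k+1$: the smoothing argument reducing to the partition with one block of size $3k-r+1$ and $r-1$ singletons is the standard and correct way to do this, and the resulting computation (the condition $m \leq 4k-3$ against the trivial bound $m = r-1 \leq 3k-1$) is exactly right. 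What your version buys is self-containedness, since the reader needs only the general criterion rather than its corollary for complete graphs; what the paper's version buys is brevity, and it implicitly yields the stronger count $\left\lfloor 3k/2 \right\rfloor$ rather than just $k+1$. Your alternative route via Hamiltonian decompositions is also valid, but note a small slip in the quantification: for odd $k$ the bound $(3k-1)/2 \geq k+1$ holds precisely for $k \geq 3$, not $k \geq 2$. This is harmless here, because an odd $k \geq 2$ is automatically at least $3$, but it should be stated that way.
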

It follows from Nash-Williams theorem \cite{NWilliams} that $K_n$ has $\left\lfloor n/2 \right\rfloor$
edge-disjoint spanning trees. This in particular proves Lemma \ref{lemma1}.

We now show the following.
\begin{lemma}
\label{lemma2}
For any $q\geq 2$ there exists a graph $G=G(q)$ with 
$3(q+1)$ vertices, which is a disjoint union of $q+1$ spanning trees and such that 
Client has a winning strategy on $G$.
\end{lemma}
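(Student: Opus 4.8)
The plan is to fix a convenient host graph and then design Client's strategy around two designated low-degree vertices whose common boundary Client starves. Write $k=q+1$, so $|V(G)|=3k$. Using Lemma~\ref{lemma1} (equivalently Nash-Williams), I would choose a packing of $q+1$ edge-disjoint spanning trees whose union $G$ has a prescribed sparse structure: in particular I would single out a vertex $w$ that is a leaf of every tree, so $\deg_G(w)=q+1$, together with a second suitable vertex $z$. The key structural fact I would exploit is that, since $G$ is a union of $q+1$ spanning trees, every cut of $G$ having at least two vertices on each side has strictly more than $q+1$ edges, while a cut of size exactly $q+1$ can only isolate a single vertex of degree $q+1$. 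The target configuration for Client is the round in which all of $V\setminus\{w,z\}$ has been joined into one component $C$ of $H$, with $w$ and $z$ still isolated and $C$ having at most $q+2$ \emph{free} outgoing edges.

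Next I would establish the \textbf{endgame}, the part I am most confident about. Suppose the game reaches the target configuration above. By Observation~\ref{obs:obs1}, in every subsequent round Waiter must offer either no edge incident to $C$ or $q+1$ edges incident to $C$: offering between $2$ and $q$ incident edges lets Client refuse them, which drops the free boundary of $C$ to at most $q$ and wins by Observation~\ref{obs:obs2}. Since every edge is eventually offered and the \emph{only} edge avoiding $C$ is $wz$, Waiter cannot keep feeding non-incident edges, and so (possibly after a few single-edge offers that only shrink $C$'s boundary further) he must at some point offer $q+1$ edges incident to $C$ in one round. Client then takes an edge running to $w$; all other offered outgoing edges are discarded, so at most one outgoing edge of $C$ survives. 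Hence the enlarged component $C'=C\cup\{w\}$ has all of its free boundary going to the lone remaining vertex $z$, i.e. at most two free outgoing edges. As $q\ge 2$, Observation~\ref{obs:obs2} finishes, giving Client the win.

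The step I expect to be the \textbf{main obstacle} is the \textbf{build-up}: steering the game into the target configuration, that is, connecting all of $V\setminus\{w,z\}$ in $H$ while keeping $w$ and $z$ isolated and driving the number of free outgoing edges of $\{w,z\}$ down from its initial value (which the cut bound above forces to be of order $q$ above $q+2$) into the protected range $\le q+2$. The danger is that while $\{w,z\}$ still has many free outgoing edges, Observation~\ref{obs:obs1} does not apply and Waiter can offer a full round of $q+1$ edges incident to $w$ or $z$, forcing Client to attach one of them prematurely. To control this I would have Client grow her main component only through edges inside $V\setminus\{w,z\}$, refuse edges at $w$ or $z$ whenever at least one other edge is on offer, and — when forced to cross a boundary — always absorb a vertex of currently minimum degree so that the exposed boundary never grows; the role of the two survivors $w,z$ would be assigned adaptively to vertices that stay isolated longest. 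The crux is to prove that the chosen tree-packing guarantees that enough boundary edges of the two survivors are discarded before $V\setminus\{w,z\}$ is exhausted; I expect this to rest on a potential/counting argument balancing the $3k-3$ edges Client must spend to connect $V\setminus\{w,z\}$ against the boundary edges Waiter is compelled to release, together with the cut bound on $G$. Finally, Observation~\ref{obs:obs3} provides a clean fallback: should Waiter instead drive $w$ and $z$ to free-degree $q+1$ while they remain isolated and adjacent, Client wins immediately, so the construction only needs to make \emph{one} of the two closing mechanisms reachable.
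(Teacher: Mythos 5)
Your endgame analysis is correct, but the proof fails exactly where you flag it, and the failure is not a missing calculation — the mechanism you propose for reaching the target configuration cannot work. First, in a union of $q+1$ edge-disjoint spanning trees two \emph{adjacent} vertices cannot both have degree $q+1$: degree $q+1$ means being a leaf of every tree, and if $w,z$ were adjacent via $wz\in T_j$ with both leaves of $T_j$, then $\{w,z\}$ would be a whole component of the spanning tree $T_j$. Hence $\deg(z)\geq q+2$ and the initial free boundary of $\{w,z\}$ is at least $2q+1>q+2$. That boundary shrinks only in rounds where Waiter offers boundary edges which Client can refuse, and Waiter is never obliged to create such rounds. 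Concretely, suppose $\deg(z)=q+2$: the $2(q+1)$ edges incident to $\{w,z\}$ split into exactly two full rounds, and the remaining $(q+1)(3q+2)-2(q+1)=3q(q+1)$ edges inside $V\setminus\{w,z\}$ fill exactly $3q$ full rounds. Waiter offers all inside edges first, then all $q+1$ edges at $w$ (Client is forced to connect $w$; his best pick, $wz$, leaves a component with exactly $q+1$ free boundary edges, too many for Observation~\ref{obs:obs2}), then the remaining $q+1$ edges at $z$, forcing Client again. If $\deg(z)>q+2$ the same works: Waiter reserves $q+1$ edges at $w$ and $q+1$ at $z$ and mixes the $\deg(z)-q-2$ spare edges at $z$ into the inside rounds (the count $3q(q+1)$ is unchanged), and Client's refusals of those spares still leave $z$ with free degree $q+2$. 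Against this play neither your target configuration nor the Observation~\ref{obs:obs3} fallback (which needs $z$'s free degree to drop to $q+1$ while $z$ is isolated, hence also needs a refused offer at $z$) is ever reached, so your winning mechanism never fires. What then decides the game is whether Waiter can force Client to connect $V\setminus\{w,z\}$ during the inside phase — a sub-problem of the same type and difficulty as the lemma itself, about which your proposal says nothing; note also that your graph is never pinned down, and your claimed cut property is false in general (a union of $q+1$ spanning trees can have cuts of size exactly $q+1$ with both sides of size at least two).

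The paper's construction avoids this trap by making its special vertices have degree $q+2$, not $q+1$, and by using \emph{three} of them spanning a triangle, with every other edge of $G$ leaving this triple. Degree $q+2$ is one more than a round's worth, so Observation~\ref{obs:obs1} protects each special vertex from the very first move: Waiter can never connect one of them without offering $q+1$ edges at it in a single round, and whenever he does, Client's forced pick can be taken \emph{inside} the triangle, producing a component (a vertex, a triangle edge, or the whole triangle) whose remaining free boundary drops to at most $q$. It is this closed local structure that makes the forcing terminate in Client's favor; a single vertex of degree $q+1$, which Waiter can swallow in one round at a time of his choosing, provides no such protection.
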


Note that this proves Theorem \ref{th1} in the case $n=3q+3$. We will later extend this construction 
to all $q$ satisfying $2 < q+1 < \lfloor \frac{n-1}{2} \rfloor$.
\begin{proof}[Proof of Lemma \ref{lemma2}]
The construction goes as follows.

We divide the vertices of $G$ into three sets $U=\{u_0, u_1, \ldots, u_q\}$, 
$V=\{v_0, v_1, \ldots, v_q\}$ and $W=\{ w_0, w_1, \ldots, w_q\}$. 
Next, we construct $q+1$ spanning trees $T_i$, $i=0,\ldots,q$.  
We first put the edges $\{u_0u_1, u_0v_0, v_0w_0\}$ into $T_0$, 
$\{v_0v_1, u_0w_0, w_0w_1\}$ into $T_1$, and $\{u_0u_i,v_0v_i,w_0w_i\}$ into $T_i$, 
where $i=2,\ldots, q$ (see Figure \ref{fig:fig1}). By Lemma~\ref{lemma1}, we can find 
$q+1$ disjoint spanning trees $T_0', T_1',\ldots ,T_q'$ on the vertices 
$(U \cup V \cup W) \setminus \{u_0, v_0, w_0\}$. We put the edges 
of $T_i'$ into $T_i$, for $i=0,\ldots,q$. Hence, we get $q+1$ disjoint 
spanning trees on the vertex set $U\cup V\cup W$. This is our graph $G=G(q)$.

\begin{figure}[!h]
\caption{The construction of $G(q)$}
\centering
\begin{tikzpicture}
\label{fig:fig1}

\filldraw (9*10:1.5cm) circle (1.5pt);
\filldraw (21*10:1.5cm) circle (1.5pt);
\filldraw (33*10:1.5cm) circle (1.5pt);

\filldraw (11*10:3.5cm) circle (1.5pt);
\filldraw (10*10:3.5cm) circle (1.5pt);
\filldraw (7*10:3.5cm) circle (1.5pt);
\filldraw (83:3.5cm) circle (0.5pt);
\filldraw (85:3.5cm) circle (0.5pt);
\filldraw (87:3.5cm) circle (0.5pt);

\filldraw (23*10:3.5cm) circle (1.5pt);
\filldraw (22*10:3.5cm) circle (1.5pt);
\filldraw (19*10:3.5cm) circle (1.5pt);
\filldraw (203:3.5cm) circle (0.5pt);
\filldraw (205:3.5cm) circle (0.5pt);
\filldraw (207:3.5cm) circle (0.5pt);

\filldraw (35*10:3.5cm) circle (1.5pt);
\filldraw (34*10:3.5cm) circle (1.5pt);
\filldraw (31*10:3.5cm) circle (1.5pt);
\filldraw (323:3.5cm) circle (0.5pt);
\filldraw (325:3.5cm) circle (0.5pt);
\filldraw (327:3.5cm) circle (0.5pt);

\draw (9*10:1.5cm) -- (21*10:1.5cm) -- (33*10:1.5cm) -- (9*10:1.5cm);

\draw (9*10:1.5cm) -- (11*10:3.5cm);
\draw (9*10:1.5cm) -- (10*10:3.5cm);
\draw (9*10:1.5cm) -- (7*10:3.5cm);

\draw (21*10:1.5cm) -- (23*10:3.5cm);
\draw (21*10:1.5cm) -- (22*10:3.5cm);
\draw (21*10:1.5cm) -- (19*10:3.5cm);

\draw (33*10:1.5cm) -- (35*10:3.5cm);
\draw (33*10:1.5cm) -- (34*10:3.5cm);
\draw (33*10:1.5cm) -- (31*10:3.5cm);

\node [right] at (9*10:1.5cm) {$u_0$};
\node [above] at (335:1.7cm) {$v_0$};
\node [above] at (205:1.7cm) {$w_0$};

\node [above] at (115:3.7cm) {$u_1$};
\node [above] at (103:3.7cm) {$u_2$};
\node [above] at (67:3.7cm) {$u_q$};

\node [right] at (350:3.7cm) {$v_1$};
\node [right] at (339:3.7cm) {$v_2$};
\node [right] at (307:3.8cm) {$v_q$};
 
\node [left] at (235:3.8cm) {$w_1$};
\node [left] at (221:3.8cm) {$w_2$};
\node [left] at (190:3.7cm) {$w_q$}; 
 
\end{tikzpicture}
\end{figure}
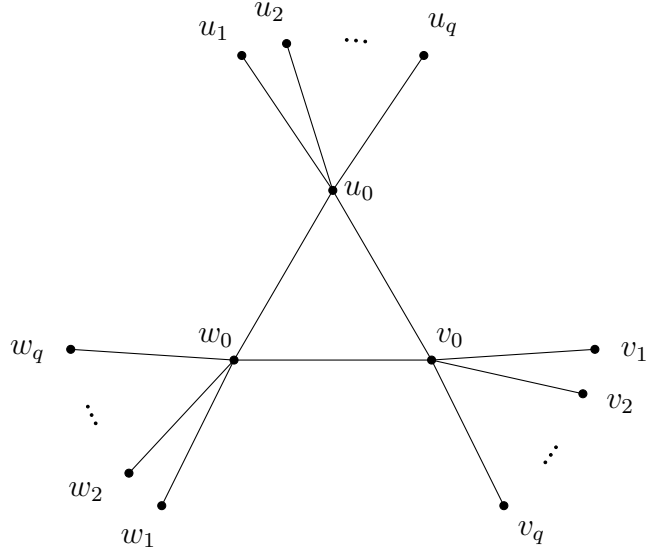


We present a strategy for Client which ensures that he we will win the game. 
Let $G'$ be the subgraph of $G$ consisting of all edges 
adjacent to at least one of the vertices $u_0$, $v_0$ or $w_0$ (this is precisely the graph in Figure \ref{fig:fig1}). 
We show that Client can isolate a subgraph of the triangle $u_0v_0w_0$, that is 
Client can ensure that in $H$ one of the vertices $u_0, v_0, w_0$ or one 
of the edges $u_0v_0$, $v_0w_0$, $w_0u_0$, 
or the whole triangle $u_0v_0w_0$ is a connected component.   

Now notice that the vertices $u_0$, $v_0$ and $w_0$ have degree $q+2$ in $G$. 
Therefore, by Observation \ref{obs:obs1}, the first time Waiter presents an edge incident to any of the above vertices, 
say to $u_0$, he must present either exactly one edge incident to $u_0$ or $q+1$ edges incident to that vertex. 
By symmetry the same argument works for $v_0$ and $w_0$.

Moreover, whenever Waiter presents an edge outside of $G'$, Client can choose this 
edge and thus the number of free edges in $G'$ can only drop down. It is easy to see 
that if Client has a winning strategy on the graph $G'$ then the same strategy works 
for the graph $G'$ with some of the edges deleted. Thus we may focus on the game 
played entirely on the graph $G'$ (and the same parameter $q$).

We shall assume by symmetry that
in the first round one of the edges offered is adjacent to $u_0$. We have two subcases.

\textit{Case 1: All of the edges offered are incident to $u_0$.}

Then either Waiter presents both $u_0v_0$ and $u_0w_0$, or just one 
of them, say $u_0v_0$.

In the first case, Client chooses a third edge 
(not adjacent to $v_0$ nor $w_0$) and by Observation \ref{obs:obs3} he wins the game.

In the second case, Client chooses $u_0v_0$. There are only $q+2$ free edges incident to $u_0v_0$ now.
If in the next round, both edges $u_0w_0$ and $v_0w_0$ are offered, then all the other edges 
offered must be incident to 
$w_0$ too. Client chooses one of these edges, leaving only $q$ free edges incident to $u_0v_0$, 
and thus winning the game.
If in the second round, exactly one of $u_0w_0$ and $v_0w_0$ 
is offered, then Client chooses this edge. There are only $q$ free edges left incident to 
the triangle $u_0v_0w_0$, and so again Client wins.
So in the second round, by Observation \ref{obs:obs1}, Waiter offers $q$ edges incident to 
$v_0$ and $1$ edge incident to $w_0$. 
Client then picks the edge incident to
$w_0$, leaving only $2 \leq q$ free edges incident to $u_0v_0$. So Client wins in this last situation as well.

\textit{Case 2: Exactly one of the edges offered is incident to $u_0$.}

We may assume that Case 1 does not apply to $v_0$ nor $w_0$, otherwise we are done. 
Then by Observation \ref{obs:obs1}, Waiter offers one edge incident to
$u_0$, one incident to $v_0$, and one incident to $w_0$. Thus $q=2$. Furthermore the edge incident to $u_0$
is either $u_0u_1$ or $u_0u_2$. Client then chooses this edge.

In the second round Waiter has to offer at least one edge incident to $\{v_0,w_0\}$. Consequently
by Observation \ref{obs:obs1}, Waiter must offer three edges adjacent to 
one of the vertices $v_0$ or $w_0$. Then Client chooses the edge $v_0w_0$. There are only $2$ edges left incident with $v_0w_0$, and
so Client wins by Observation \ref{obs:obs2}.

\end{proof}

\begin{proof}[Proof of Theorem \ref{th1}]

The crucial part of the above argument is the existence of a subgraph $G'$ in which we have 
three vertices spanning a triangle and such that the degree of each of them in $G$ 
is equal precisely to $q+2$. By repeating the same reasoning, one can ensure 
oneself that in the connectivity game played on the graph $G$ of this form, that 
is having such a subgraph $G'$, Client always has a winning strategy. In the remaining 
part of the proof we show that for any $3 \leq q+1 < \lfloor \frac{n-1}{2}\rfloor$ 
there exists such a graph $G = G(n,q)$.

Let $n \geq 9$ and $3 \leq q+1 < \left\lfloor \frac{n-1}{2}\right\rfloor$. 
We use the induction to construct $G(n,q)$ from $G(n-2,q-1)$. 
To start the induction, we need a graph $G(n - 2(q-2),2)$
on $n - 2(q-2) \geq 9$ vertices, which is the union of three spanning trees 
and which has three fixed vertices $u_0$, $v_0$, $w_0$ each of degree four 
and such that they span a triangle. For example, we can take the graph $G(2)$ 
which has 9 vertices, add to it $n - 2(q-2) - 9$ new vertices and add three edges 
between each new vertex and three arbitrary vertices from $G(2)$ different from 
$u_0$, $v_0$, $w_0$.

The construction goes as follows. Assume that we have already constructed the graph $G(n_0,q_0)$ 
on $n_0$ vertices which is the union of $q_0+1$ spanning trees and  
in which $u_0$, $v_0$, $w_0$ all have degree $q_0+2$. We now split vertices of $G(n_0,q_0)$ 
into three sets $V_1$, $V_2$ and $V_3$. The set $V_1$ consists of the vertices $u_0$, $v_0$, $w_0$, 
and we split the remaining vertices into two sets of size $\lfloor \frac{n_0-3}{2} \rfloor$ 
and $\lceil \frac{n_0-3}{2} \rceil$ respectively. Let $T_0,T_1,\ldots,T_{q_0}$ denote 
the spanning trees of $G(n_0,q_0)$. We add two new vertices $u$ and $v$. 
We then use the edges between $u$ and $V_2$ to extend 
each spanning tree $T_i$ to a spanning tree $T_i'$ of $G(n_0,q_0) \cup \{u\}$. Note that this is possible,
as $\lfloor \frac{n_0-3}{2} \rfloor = \left\lfloor \frac{n_0-1}{2}\right\rfloor - 1 \geq q_0+1$.
Similarly, we use the edges between $v$ and $V_3$ to extended 
each $T_i'$ to a spanning tree of $G(n_0,q_0) \cup \{u, v\}$. Finally, we construct a 
new spanning tree $T$ of $G(n_0,q_0) \cup \{u, v\}$ from the edges $uv$, $uu_0$,
$uv_0$, $uw_0$ and all edges between $u$ and $V_3$ and all edges between $v$ and $V_2$.
\end{proof}

\noindent
\textit{Acknowledgements}. We would like to thank Tibor Szab\'o for bringing the problem studied in this paper to our attention.

\bibliographystyle{plain}

\end{document}